\theoremstyle{plain}
\newtheorem{theorem}{Theorem}
\newtheorem{lemma}[theorem]{Lemma}
\theoremstyle{definition}
\newtheorem{remark}[theorem]{Remark}
\newtheorem{example}[theorem]{Example}
\newcommand{\IP}{{\mathbb{P}}}
\newcommand{\IQ}{{\mathbb{Q}}}
\newcommand{\IR}{{\mathbb{R}}}
\newcommand{\IZ}{{\mathbb{Z}}}
\newcommand{\ko}{{\mathcal O}}
\newcommand{\kc}{{\mathcal C}}
\newcommand{\rank}{\text{rk}}
\newcommand{\supp}{\text{supp}}
\newcommand{\kk}{{\mathbf{k}}}
\newcommand{\inv}{^{-1}}
\DeclareMathOperator{\Pic}{\mathrm{Pic}}
\DeclareMathOperator{\Hom}{\mathrm{Hom}}
\DeclareMathOperator{\Aut}{\mathrm{Aut}}
\DeclareMathOperator{\Coh}{\mathrm{Coh}}
\newcommand{\FM}{\mathsf{FM}}
\newcommand{\TTT}{\mathsf{T}\!}   
\newcommand{\LLL}{\mathsf{L}}
\newcommand{\RRR}{\mathsf{R}}
\newcommand{\DDD}{\mathcal{D}}
\newcommand{\isom}{ \text{{\hspace{0.48em}\raisebox{0.8ex}{${\scriptscriptstyle\sim}$}}}
                    \hspace{-0.65em}{\rightarrow}\hspace{0.3em}} 
\newcommand{\arrd}{ \ar@{-}[r] \ar@{=}[d] }
\begin{document}

\begin{center}
{\Large\bfseries\sffamily Autoequivalences of toric surfaces}

\medskip

Nathan Broomhead, David Ploog\renewcommand{\thefootnote}{\fnsymbol{footnote}}
\footnote{
The second author was supported by DFG priority program 1388 ``Darstellungstheorie".

\noindent
Keywords: toric surface, derived category, group of autoequivalences, exceptional object.

\noindent
AMS Mathematics Subject Classification: 14J26, 14M25, 18E30.
}
\setcounter{footnote}{0}
\end{center}

\bigskip

\begin{quote}
\small \noindent 
We show that the autoequivalence group of the derived category of any smooth projective toric surface
is generated by the standard equivalences and spherical twists obtained from $-2$-curves. In many
cases we give all relations between these generators.
We also prove a close link between spherical objects and certain pairs of exceptional objects.
\end{quote}

\bigskip

\noindent
In this article, we study the derived category of any smooth, projective, toric surface
or rather its group of autoequivalences. We give generators for each such group and in
great, although not full, generality we are able to go further and write down an explicit
description of the group; see Theorem~\ref{thm:AutDX}. Similar descriptions are only
known for abelian varieties \cite{Orlov} and for varieties with ample or anti-ample 
canonical bundle \cite{BO}.
 
We follow the philosophy established in the work of Bondal, Mukai, Orlov and others
that the group of autoequivalences of a variety is highly influenced by the positivity of
the (anti-)canonical bundle.
In particular, varieties with trivial canonical bundle possess the richest autoequivalences,
while the group of autoequivalences for varieties at both ends of the spectrum (i.e.\ $K_X$
ample or anti-ample) are minimal by the famous result of Bondal and Orlov \cite{BO}.

Our surfaces have rather negative canonical bundle as $-K_X$ is big. Thus we expect rather few 
autoequivalences beyond the standard ones. However, toric surfaces can contain smooth rational 
curves of self-intersection $-2$, a simple example being the second Hirzebruch surface, and 
such curves give rise to spherical twists. We prove that these twists are essentially the only
new autoequivalences which can occur: Theorem~\ref{thm:AutDX} is the general result and
Theorem~\ref{thm:toric} gives the application to toric surfaces.
At the opposite end of the surface classification, Ishii and Uehara \cite[Theorem 1.3]{IU} 
already proved a corresponding statement: the only non-standard autoequivalences for smooth 
projective surfaces of general type whose canonical model has at most $A_n$-singularities come 
from spherical twists associated to $-2$-curves.
We draw heavily upon their results in this article. In many cases however, we go further, and
describe all relations between the generators.

In Theorem~\ref{thm:sphexc}, we prove a relationship between exceptional and spherical 
objects on a smooth projective toric surface. It is well-known that such surfaces come with
an abundance of exceptional objects, including, for example, all line bundles.
We link the rather few spherical objects to the wealth of exceptional ones, by discussing
exceptional presentations of spherical objects, i.e.\ exact triangles $E'\to E\to S$ with
$E'$, $E$ exceptional and $S$ spherical.

While toric surfaces form the main class of examples, our results actually hold in greater 
generality. In Section~\ref{sec:complexity-one}, we give some examples of non-toric rational
surfaces where the group of autoequivalences can also be described.
\bigskip

\noindent
\textbf{Acknowledgements:} We would like to thank those participants of the Warwick August 2010
conference on orbifolds and the McKay correspondence who discussed the topic with us. 
We also thank Christian B\"ohning, Lutz Hille, Akira Ishii, Arthur Prendergast-Smith, and
Hokuto Uehara for answering our questions. We are very grateful to Fabrizio Catanese for help
with Lemma~\ref{lem:zariski}, Anne Fr\"uhbis-Kr\"uger for help with the calculation of
Example~\ref{computerexample} and to Hendrik S\"u\ss\ who provided much input into
Section~\ref{sec:complexity-one}.
We thank the referee and the editor for useful comments.

\section{Setup and Results}
\label{sec:setup}

\noindent
Let $X$ be a smooth, projective surface over an algebraically closed field $\kk$. Denote its 
derived category by $\DDD(X):=D^b(\Coh(X))$, this is a $\kk$-linear, triangulated category. 
See the textbook \cite{HuyFM} for background on derived categories of varieties.
For any two objects $A,B\in\DDD(X)$, we set $\Hom^\bullet(A,B)=\bigoplus_i\Hom(A,B[i])[-i]$;
this is a complex of $\kk$-vector spaces with trivial differential. Note that by our 
assumptions on $X$, the dimension of $\Hom^\bullet(A,B)$ is finite. Let $\omega_X$ denote the
canonical bundle on $X$, then $-\otimes\omega_X[2]\colon\DDD(X)\isom\DDD(X)$ is a Serre functor,
i.e.\ there are canonical isomorphisms, bifunctorially in $A,B\in\DDD(X)$
\[ \Hom(A,B)\cong\Hom(B,A\otimes\omega_X[2])^* . \] 

The \emph{standard autoequivalences} form the subgroup of $\Aut(\DDD(X))$
\[ A(X) := (\Pic(X) \rtimes \Aut(X)) \times \IZ[1] \]
where $\Pic(X)$ are the line bundle twists, $\Aut(X)$ surface automorphisms and $\IZ[1]$
the shifts of complexes.

Sometimes, $\Aut(\DDD(X))$ is strictly larger than $A(X)$. For example, when $X$ is an abelian
surface there will always be the non-standard original Fourier-Mukai transform (see \cite{mukai}).
Another source for non-standard equivalences are the \emph{spherical twists} $\TTT_S$
introduced in \cite{ST}; we refer to \cite[\S8]{HuyFM} for a concise presentation.
These are built from \emph{spherical objects} in $\DDD(X)$, i.e.\ objects $S\in\DDD(X)$ such that
$\Hom^\bullet(S,S)=\kk\oplus\kk[-2]$ and $S\otimes\omega_X\cong S$. A crucial example is given by
$S=\ko_C$, where $C\subset X$ is a smooth, rational curve with self-intersection number $C^2=-2$.

Let us introduce some notation:
\begin{align*}
 \Delta(X)      &:= \{ C\subset X \text{ irreducible $-2$-curve} \} , && \text{a possibly infinite set}; \\
 \Pic_\Delta(X) &:= \langle\ko_X(C) \mid C\in\Delta\rangle , && \text{as a subgroup of } \Pic(X); \\
 B(X)           &:= \langle \TTT_S \mid S\in\DDD(X) \text{ spherical} \rangle , && \text{a normal subgroup of } \Aut(\DDD(X)) .
\end{align*}

In \cite{IU}, Ishii and Uehara prove that for a smooth projective surface of general type whose
canonical model has at worst $A_n$-singularities, the autoequivalences are generated by $B(X)$ 
and the standard autoequivalences. The following theorem is a counterpart to this in the case where $-K_X$ is big, i.e.\ a sufficiently
high power of $-K_X$ gives a birational map from $X$ to a surface in projective space (see 
\cite[Definition 2.2.1]{Lazarsfeld}). Under
certain conditions, we can go further and describe the structure of the group of autoequivalences.

\begin{theorem} \label{thm:AutDX}
Let $X$ be a smooth, projective surface and consider the conditions

\noindent
\begin{tabular}{l @{\hspace{0.8em}} p{0.92\textwidth}}
(1) & The anti-canonical bundle is big. \\
(2) & The $-2$-curves on $X$ form disjoint chains of type $A$. \\
(3) & $\Pic(X)\cong\Pic_\Delta(X)\oplus P$ where $P$ is an $\Aut(X)$-invariant complement.
\end{tabular}

\noindent
If $X$ satisfies (1) and (2) then $\Aut(\DDD(X))$ is generated by $\Pic(X)$, $\Aut(X)$, 
$\IZ[1]$ and $B(X)$. If $X$ satisfies (1)--(3) then there is the following decomposition of
$\Aut(\DDD(X))$
\[ \Aut(\DDD(X)) = B(X) \rtimes (P \rtimes \Aut(X)) \times \IZ[1] . \]
\end{theorem}

The conditions (1)--(3) are satisfied by broad classes of surfaces as the next two results
show. To state Theorem~\ref{thm:toric}, we need to introduce one further piece of terminology
(see Section~\ref{sec:toric} for details): $\Aut(\Sigma(X))$ is the group of automorphisms of
a fan $\Sigma$ giving the toric surface $X$; this is a finite subgroup of $\Aut(X)$.

\begin{theorem} \label{thm:toric}
If $X$ is a smooth, projective, toric surface, then the conditions (1) and (2)
of Theorem~\ref{thm:AutDX} are satisfied. All but three such surfaces admit a 
splitting of $\Pic_\Delta(X)\subset\Pic(X)$. An $\Aut(X)$-invariant complement 
exists if and only if an $\Aut(\Sigma(X))$-invariant complement exists.
\end{theorem}

An $\Aut(\Sigma(X))$-invariant complement tautologically exists whenever there 
are no non-trivial fan automorphisms (the `generic' case), yielding the full
structure of $\Aut(\DDD(X))$ from Theorem~\ref{thm:AutDX}. When $\Aut(\Sigma(X))$
is non-trivial, an invariant complement may or may not exist; in
Section~\ref{sec:toric} we give examples of both possibilities.

Theorem~\ref{thm:AutDX} also applies to some non-toric surfaces.
The result below is proved Section~\ref{sec:complexity-one}, where we also give
examples of such surfaces meeting condition (3).

\begin{theorem} \label{thm:complexity-one}
If $X$ is a smooth, projective, rational surface with a $\kk^*$-action such that
all isotropy groups are either 0 or $\kk^*$, then the conditions (1) and (2) of 
Theorem~\ref{thm:AutDX} are satisfied.
\end{theorem}

We end this section with a couple of remarks about Theorem~\ref{thm:AutDX}.
\begin{remark}
On any smooth projective surface $X$ with $K_X\neq0$, spherical objects are necessarily supported
on curves. The relations  $\Pic(X)\cap B(X) = \Pic_\Delta(X)$ and $\Aut(X)\cap B(X) = 1$
then hold \cite[\S4]{IU}.
Generally, $B(X)$ is a normal subgroup of $\Aut(\DDD(X))$.
These properties hint at the semi-direct decomposition of $\Aut(\DDD(X))$
in Theorem~\ref{thm:AutDX}, but there are two obstacles: our choice of $B(X)$ as the
normal factor of $\Aut(\DDD(X))$, together with $\Pic_\Delta(X)\subset B(X)$, demands a splitting
$\Pic(X)=P\oplus\Pic_\Delta(X)$. Next, the action of $\Aut(X)$ on $\Pic(X)$
forces $P$ to be $\Aut(X)$-invariant. These two conditions are exactly the content of (3).
\end{remark}

%
%

\begin{remark} \label{rmk:splitting_braid_group}
We state what is known about $B(X)$.
If $X$ is rational with $-K_X$ big, as it will be in the examples of Sections~\ref{sec:toric} and
\ref{sec:complexity-one}, then $\Delta$ is a finite set as follows, for example, from the fact that
$X$ is a Mori dream space \cite[\S2]{TVV}.

Let $\kc=\bigcup_{C\in\Delta}C$ be the union of all $-2$-curves on $X$ and 
$\kc=\kc_1\sqcup\dots\sqcup\kc_r$ be its decomposition into connected components. Let 
$B(X)|_{\kc_l}\subset B(X)$ be the subgroup obtained from spherical objects supported on $\kc_l$. 
Then one has
 $B(X)=B(X)|_{\kc_1}\times\dots\times B(X)|_{\kc_r}$
since spherical twists corresponding to fully orthogonal objects commute.
Ishii and Uehara \cite{IU} give a minimal set of $|\kc_l|+1$ generators
\[ B(X)|_{\kc_\ell} = \langle \TTT_{\ko_C(-1)}, \TTT_{\omega_{\kc_\ell}} \mid C\in\Delta_\ell \rangle
                  = \langle \TTT_{\ko_C(-1)}, \TTT_{\ko_C} \mid C\in\Delta_\ell \rangle \]
so that the second set of $2|\kc_l|$ twists also generates; 
here $\Delta_\ell := \{C\in\Delta\mid C\subset\kc_\ell\}$ --- we point out that the results of
\cite{IU} only apply to  chains $\kc_\ell$ of type $A$.
Finally, by \cite[Corollary~37]{IUU}, $B(X)|_{\kc_l}$ is an affine braid group on $|\kc_l|$ strands.
\end{remark}

\section{Exceptional and spherical objects}

\noindent
An object $E\in\DDD(X)$ is \emph{exceptional} if $\Hom^\bullet(E,E)=\kk$, 
i.e.\ it is as simple as possible from the point of view of the derived category.
Toric and, more generally, rational surfaces carry many exceptional objects --- 
enough to form full exceptional collections, see \cite{HP}.
On the contrary, spherical objects are rarer on the surfaces under study because 
they have to be supported on configurations of $-2$-curves.
Therefore, it seems natural to wonder whether spherical objects can be expressed via 
exceptional ones. 

Before stating the theorem, we recall that an \emph{exceptional pair} consists of two
exceptional objects $E',E\in\DDD(X)$ such that $\Hom^\bullet(E,E')=0$. We will call
$(E',E)$ a \emph{special exceptional pair} if it is an exceptional pair with
$\Hom^\bullet(E',E)=\kk\oplus\kk[-1]$.

\begin{theorem} \label{thm:sphexc}
Let $X$ be a smooth, projective surface and suppose $E', E, S \in \DDD(X)$ are objects which fit into 
an exact triangle $E'\to E\to S$. 

\vspace{0.5ex}

\noindent
\begin{tabular}{c p{0.9\textwidth}}
(i)   & If $(E',E)$ is a special exceptional pair, then $\Hom^\bullet(S,S)=\kk\oplus\kk[-2]$. \\
(ii)  & If $E$ is exceptional, $S$ is spherical and $\Hom^\bullet(E,S)=\kk$, then $(E',E)$ is a
       special exceptional pair. \\
(iii) & If $S$ is spherical and $X$ is a rational surface satisfying conditions (1) and (2) of
        Theorem~\ref{thm:AutDX}, then $(E',E)$ can be chosen to be a special exceptional pair.
\end{tabular}
\end{theorem}

Note that by Theorem~\ref{thm:toric}, smooth projective toric surfaces satisfy
the conditions of (iii) for the above theorem.

\begin{proof}
For (i) and (ii), use the following diagram in $\DDD(\kk)$:
\[ \xymatrix{
\Hom^\bullet(E',E')        \ar[r] & \Hom^\bullet(E',E)       \ar[r] & \Hom^\bullet(E',S) \\
\Hom^\bullet(E,E')  \ar[u] \ar[r] & \Hom^\bullet(E,E) \ar[u] \ar[r] & \Hom^\bullet(E,S) \ar[u] \\
\Hom^\bullet(S,E')  \ar[u] \ar[r] & \Hom^\bullet(S,E) \ar[u] \ar[r] & \Hom^\bullet(S,S) \ar[u]
} \]
A diagram chase around this diagram implies (i), and also (ii), using the assumption 
of sphericality on $S$ to invoke Serre duality.

For claim (iii) we use \cite[Proposition 1.6]{IU} which states that the spherical twists
of objects supported on a chain of $-2$-curves act transitively on these spherical objects.
Using this, together with Theorem~\ref{thm:AutDX} we see that, for any spherical object 
$S\in\DDD(X)$, there exists $\varphi\in\Aut(\DDD(X))$ such that $\varphi(S)\cong\ko_C(a)$
for some $C\in\Delta(X)$ and $a\in\IZ$.
(As in Remark~\ref{rmk:splitting_braid_group}, knowing this property for a single 
$A_n$-chain is enough in order to apply it to $X$.)

Since $X$ is assumed to be rational, line bundles are exceptional objects and we get the exceptional
presentation $\ko_X(-C)\to\ko_X\to\ko_C$ for the sheaf $\ko_C$.
We may contract $\ko_C$ in $X$ to obtain a surface with a rational singularity. Choosing 
a smooth curve which goes through the singular point, its strict transform $H$ in $X$ will
have $H.C=1$. Then $\varphi\inv(\ko_X(-C+aH))\to\varphi\inv(\ko_X(aH))\to S$ is a
presentation for $S$ by exceptional objects.

Finally, the assertions that $\Hom^\bullet(E',E)=\kk\oplus\kk[-1]$ and $\Hom^\bullet(E,E')=0$
follow at once from
$\Hom^\bullet(\ko_X,\ko_X(-C))=H^\bullet(\ko_X(-C))=0$ together with
$\Hom^\bullet(\ko_X(-C),\ko_X)=H^\bullet(\ko_X(C))=\kk\oplus\kk[-1]$.
\end{proof}

\begin{example}
Part (i) of the theorem states that $S$ satisfies the Ext-condition for spherical
objects. However, it can happen that $S\not\cong S\otimes\omega_X$, and so $S$ is
not spherical. As a specific example, consider $F_2$, the second Hirzebruch surface. 
It contains a (unique) $-2$-curve $C\subset F_2$; hence the object $\ko_C\in\DDD^b(F_2)$
is spherical. Let $\pi\colon X\to F_2$ be the blow-up of $F_2$ in one (of the two)
torus-invariant points on $C$. We denote by $D$ the exceptional curve and by $C'$
again the strict transform of $C$. Thus, $\pi^{-1}(C)=C'+D$.

The functor $\LLL\pi^*\colon\DDD^b(F_2)\to\DDD^b(X)$ is fully faithful, as follows 
from adjunction, the projection formula and $\RRR\pi_*\ko_X=\ko_{F_2}$ (or see 
\cite[Proposition 11.13]{HuyFM}).
Consider the triangle $\ko_{F_2}(-C)\to\ko_{F_2}\to\ko_C$ in $\DDD^b(F_2)$. 
Here, $(\ko_{F_2}(-C),\ko_{F_2})$ is a special exceptional pair. Pulling back this
triangle under $\LLL\pi^*$ yields $\ko_X(-C'-D)\to\ko_X\to\ko_{C'+D}$. 
Since the pullback functor is fully faithful, $(\ko_X(-C'-D),\ko_X)$ is also a special
exceptional pair. We have $\Hom^\bullet(\ko_{C'+D},\ko_{C'+D})=\kk\oplus\kk[-2]$, from
part (i) of Theorem~\ref{thm:sphexc} or from the fully faithfulness of $\LLL\pi^*$.
However, the sheaf $\ko_{C'+D}$ is not invariant under twisting with $\omega_X$: the 
curves $C'$ and $D$ on $X$ are smooth and rational but of self-intersection $-3$ and $-1$,
respectively.
\end{example}

\section{Proof of Theorem \ref{thm:AutDX}}
\label{sec:proof}

\noindent
Before giving an outline of the proof, we recall the assumptions on the surface $X$:

\begin{tabular}{l @{\hspace{0.8em}} p{0.92\textwidth}}
(1) & The anti-canonical bundle is big. \\
(2) & The $-2$-curves on $X$ form disjoint chains of type $A$. \\
(3) & $\Pic(X)\cong\Pic_\Delta(X)\oplus P$ where $P$ is an $\Aut(X)$-invariant complement.
\end{tabular}

\noindent
Let $\kc=\bigcup_{C\in\Delta}C$ be the union of all $-2$-curves on $X$ and let
$\kc=\kc_1\sqcup\dots\sqcup\kc_r$ be its partition into connected components.
By assumption (2), each $\kc_i$ is a chain of type $A$. Given any autoequivalence
$\varphi\in\Aut(\DDD(X))$, we modify it in three steps until we arrive at a standard
autoequivalence --- for this, we only need conditions (1) and (2):

\noindent
\begin{tabular}{ @{} p{0.1\textwidth} @{} p{0.9\textwidth}}
Step 1: & Modify $\varphi$ using $\Aut(X)$ and $\IZ[1]$ such that points outside of $\kc$ are fixed. \\
Step 2: & Show that the subcategory of objects supported on a chain $\kc_i$ is preserved. \\
Step 3: & Invoke the braid group action of Ishii and Uehara \cite{IU} to modify further
          by spherical twists, until all points are fixed.
\end{tabular}
At this stage, the resulting autoequivalence is standard, i.e.\ an element of $A(X)$. 
This proves that $\Aut(\DDD(X))$ is generated by $A(X)$ and $B(X)$. 
Finally we address the relations. It is here that we make use of condition (3):

\noindent
\begin{tabular}{ @{} p{0.1\textwidth} @{} p{0.9\textwidth}}
Step 4: & Prove the decomposition $\Aut(\DDD(X)) = B(X) \rtimes (P \rtimes \Aut(X)) \times \IZ[1]$.
\end{tabular}

\medskip
\noindent
\emph{Step 1.}
By a well-known result of Orlov \cite{Orlov} there is a unique Fourier-Mukai kernel
$P\in\DDD(X\times X)$, so that $\varphi\cong\FM_P$. As the anti-canonical sheaf is big
by assumption, the conditions required for \cite[Theorem~2.3(2)]{Kawamata-DK} hold. In
particular, looking at the proof of this theorem we see that there exists an irreducible
component $Z\subset\supp(P) \subset X\times X$ such that the restrictions to $Z$ of the
natural projections $\pi_1,\pi_2\colon X\times X \to X$ are surjective and birational.
Also see \cite[\S6]{HuyFM} for this.
Following \cite{IU}, we set 
 \[ q := \pi_2|_Z \circ \pi_1|_Z\inv\colon X \dashrightarrow X .\]
As $X$ is a smooth surface, using \cite[Lemma~4.2]{Kawamata-DK}, we note that this 
birational map is a genuine isomorphism --- any birational map between smooth surfaces
is a sequence of blow ups and blow downs but Kawamata's lemma shows that the birational
map in question is an isomorphism in codimension 1. 

Now we show that for any point $x \in X$, the support of $\varphi(k(x))$ is either the
point $q(x)$, or is a connected subset of $\kc$. Note that $\supp(\varphi(k(x))$ must 
be connected, as the map
 $\Hom_{\DDD(X)}(k(x),k(x)) \to \Hom_{\DDD(X)}(\varphi(k(x)),\varphi(k(x)))$
is bijective. It is a general property of equivalences to commute with Serre functors, 
in particular $\varphi(k(x))=\varphi(k(x)\otimes\omega_X)\cong\varphi(k(x))\otimes\omega_X$,
for any point $x$. As $\omega_X$ is a non-trivial line bundle, $\varphi(k(x))$ must
have proper support, i.e.\ $\dim\varphi(k(x))<2$. Therefore $\varphi(k(x))$ is either
supported at a point, or it is supported on a union of curves.
Suppose $C\subset\supp(\varphi(k(x)))$ is any irreducible curve contained in the support.
Since
 $\omega_X|_C\otimes\varphi(k(x))|_C = (\omega_X\otimes\varphi(k(x)))|_C = \varphi(k(x))|_C$
and $\varphi(k(x))$ is supported on the curve $C$, we get $\omega_X|_C=\ko_C$. Hence,
$C\subset X$ is a curve with $K_X.C=0$. Since $-K_X$ is big, it follows from 
Lemma~\ref{lem:zariski} below that $C$ is a smooth, rational curve with $C^2=-2$.
Now looking at the FM transform at the level of its support, we observe that 
\[ q(x) = \pi_2(Z\cap(\{x\}\times X)) \subseteq \pi_2(\supp(P)\cap(\{x\}\times X)) 
        = \supp(\varphi(k(x))). \]
If $\varphi(k(x))$ is supported at a point then this point must be $q(x)$. Otherwise
we have shown that all components of $\supp(\varphi(k(x)))$ are $-2$-curves and so 
$q(x)$ is contained in some $-2$-curve $C$. As $q$ is a surface automorphism, we find
$x\in q\inv(C)$, another $-2$-curve. 
%
%
In particular this implies that if $x\in X\setminus\kc$ then $\varphi(k(x))$ is
supported at the point $q(x)$ and is therefore a shifted skyscraper sheaf of 
length 1
\[ \varphi(k(x))=k(q(x))[i]=q_*(k(x))[i]. \]
The integer $i$ does not depend on $x$: for an equivalence between derived 
categories of smooth, projective schemes, mapping a skyscraper sheaf to a 
skyscraper sheaf is an open property; see \cite[Corollary 6.14]{HuyFM}.
Hence, $\psi:=q^*\circ\varphi[-i]$ is an autoequivalence of $\DDD(X)$ which
fixes all skyscraper sheaves $k(x)$ for $x\in X\setminus\kc$.

\medskip
\noindent
\emph{Step 2.}
We claim that $\psi$ preserves $\kc$, i.e.\ induces an autoequivalence of 
$\DDD_\kc(X)$. Here, $\DDD_\kc(X)$ is the full subcategory of $\DDD(X)$ consisting
of objects whose support is contained in $\kc$. In order to prove the claim,
suppose that $A\in\DDD_\kc(X)$. We need to show that $\supp(\psi(A)) \subseteq \kc$.
If there was $y\in\supp(\psi(A))$, $y\notin\kc$, there would be a non-zero morphism
$\psi(A)\to k(y)$. However, this would imply a non-zero map $A\to\psi\inv(k(y))=k(y)$,
in contradiction to the assumption $\supp(A)\subset\kc$.

In fact we can see that $\psi$ preserves each connected component $\kc_i$. For
this, consider a curve $B$ whose self-intersection number is not $-2$; in 
particular, $B$ is not contained in $\kc$. If $B$ does not meet the component
$\kc_i$, then the same is true for the transform, i.e.\ $\supp(\psi(\ko_B))$
does not intersect $\kc_i$, using same reasoning as in the previous paragraph.
More generally, if $B$ does not meet several of the components, then the same
will be true for the transform. So if we can find enough curves $B$ to separate
the components of $\kc$, then $\psi$ has to preserve each of them. See
Lemma~\ref{lem:separation_general} below for a proof of this fact.


Therefore we are in a position to use the `Key Proposition' of Ishii and Uehara
\cite{IU} repeatedly on each chain of $-2$-curves: there exist an integer $j$ 
and an auto\-equivalence $\Psi \in B(X)$ such that $\Psi \circ \psi$ sends every 
skyscraper sheaf $k(x)$ for $x \in \kc$ to $k(y)[j]$ for some $y \in \kc$. In
\cite{IU}, only globally defined autoequivalences are used, so that the presence
of several chains does not pose an obstacle.

\medskip
\noindent
\emph{Step 3.}
A well-known lemma of Bridgeland and Maciocia (\cite[3.3]{BM}, see also
\cite[Corollary 5.23]{HuyFM}) states that an autoequivalence permuting skyscraper
sheaves of length 1 must be in $\Pic(X)\rtimes\Aut(X)$. Thus we get
\[ \Psi [-j] \circ \psi = \Psi \circ q^*\circ\varphi[-i-j] \in \Pic(X) \rtimes \Aut(X) .\]
Hence $\Aut(\DDD(X))$ is indeed generated by $\Aut(X)$, $\Pic(X)$, $B(X)$ and $\IZ[1]$.

\medskip
\noindent
\emph{Step 4.}
The relations $\Aut(X)\cap B(X)=1$ and $\Pic(X)\cap B(X) = \Pic_\Delta(X)$ are
proved in Lemma~4.14 and Proposition~4.18 of \cite{IU}; note that we can treat
each chain individually using Remark~\ref{rmk:splitting_braid_group}. 

Now we assume that the embedding $\Pic_\Delta(X)\subset\Pic(X)$ splits and that there
is a complement $P$ fixed by $\Aut(X)$ --- this is condition (3) of Theorem~\ref{thm:AutDX}.
We get
\begin{align*}
  A(X) &=     \IZ[1] \times (\Pic(X)\rtimes\Aut(X)) \\
       &\cong \IZ[1] \times \big( (\Pic_\Delta(X)\rtimes\Aut(X)) \oplus (P\rtimes\Aut(X)) \big)
\end{align*}
We thus have two subgroups of $\Aut(\DDD(X))$, namely $\IZ[1] \times (P\rtimes\Aut(X))$
and the normal subgroup $B(X)$, which together generate $\Aut(\DDD(X))$ and whose 
intersection is trivial. Hence we obtain the desired semi-direct product decomposition,
and the proof of Theorem~\ref{thm:AutDX} is finished, apart from the following lemmas.

\begin{lemma} \label{lem:zariski}
Let $X$ be a smooth, projective surface with $-K_X$ big. If $C$ is an irreducible, reduced
curve on $X$ with $K_X.C=0$, then $C$ is a $-2$-curve, i.e.\ smooth and rational with $C^2=-2$.
\end{lemma}

\begin{proof}
A big divisor is pseudo-effective \cite[Theorem~2.2.26]{Lazarsfeld}. Hence we can 
use Zariski's decomposition $-K_X=P+N$, where $P$ and $N$ are $\IQ$-divisors with $P$ nef,
$N$ effective and where $P$ has zero intersection number with every prime divisor of $N$; furthermore $N$ is also
negative definite \cite[Theorem~2.3.19]{Lazarsfeld}.
%
%
The positive part $P$ carries all the sections of $-K_X$ and is therefore big as well
\cite[Proposition~2.3.21]{Lazarsfeld}. Since $P$ is big and nef, we get $P^2>0$ 
\cite[Theorem~2.2.16]{Lazarsfeld}.

Our next claim is that $K_X.C=0$ implies $P.C=0$: If $C$ is a component of $N$, this is
obvious from the Zariski decomposition. Otherwise we have $C.N\geq0$ as $N$ is
effective. We also find $P.C\geq0$ as $P$ is nef. From $0=(-K_X).C=(P+N).C\geq0$ we
deduce $P.C=0$.

The Hodge index theorem yields $C^2<0$ since $P^2>0$ and $P.C=0$. Finally, applying 
the adjunction formula with $K_X.C=0$ and $C^2<0$ gives $\deg(K_C)=(K_X+C).C=C^2<0$.
Riemann-Roch and duality imply $g(C)=1-\chi(\ko_C)=1+\chi(\omega_C)=1+\deg(K_C)/2\leq0$,
hence $g(C)=0$. It follows that $C$ is rational and smooth ---
see \cite[\S II.11]{BHPvdV}) for details.
Using the adjunction formula again shows $C^2=-2$.
\end{proof}

\begin{lemma} \label{lem:separation_general}
Let $X$ be a projective surface such that all $-2$-curves appear in
$ADE$-chains. Then for any two such chains, there exists a curve meeting one 
chain transversally and avoiding the other.
\end{lemma}

\begin{proof}
Fix two different chains $\kc$, $\kc'$ of $-2$-curves. By assumption, these
are disjoint. We contract $\kc$ and $\kc'$ to obtain a surface $Y$ with two
rational singularities $y$, $y'$. This is possible, i.e.\ $Y$ is algebraic,
since we are dealing with chains of $-2$-curves of type $ADE$; see
\cite[Theorem 2.7]{Artin}. 
In fact, $Y$ is projective since $X$ was. Choosing an ample divisor of 
sufficiently large degree, we find a curve $B\subset Y$ going through $y$
but missing $y'$. Its preimage under the contraction $X\to Y$ then has the
desired property.
\end{proof}

\section{Toric surfaces and proof of Theorem \ref{thm:toric}}
\label{sec:toric}

\noindent
In this section, we will work with a smooth, projective toric surface $X$. 
We start by fixing some notation and gathering a few well-known properties
of toric surfaces that we will use later. As a general reference for the
exposition below, we refer the reader to \cite{Fulton}.

Let $N$ be a rank $2$ lattice and define $N_\IR := N \otimes_\IZ \IR$. A toric
surface $X$ is specified by a fan $\Sigma$ of (strongly convex rational 
polyhedral) cones in $N_\IR$. We denote by $\Sigma(1)$ the set of rays (one
dimensional cones) in $\Sigma$, by $\{v_i\}_{i \in\Sigma(1)} \subset N$ the
set of primitive generators of the rays and by $\{D_i\}_{i \in\Sigma(1)}$ the
set of torus invariant divisors corresponding to the rays; each $D_i$ is an
irreducible, torus-invariant curve.

We assume that the fan is complete (the support of $\Sigma$ is $N_\IR$) which
(in the surface case) is equivalent to the property that $X$ is projective. 
The variety $X$ is smooth and this is equivalent to the condition on the fan,
that for each two-dimensional cone $\sigma$, the generators of the rays of 
$\sigma$ form a basis for $N$. Ordering the generators cyclicly, it follows
that
\[ \alpha_i v_i = v_{i-1} + v_{i+1} \qquad \forall i =1, \dots ,|\Sigma(1)| \]
for some integers $\alpha_i$. It can be shown that $-\alpha_i$ is the 
self-intersection number of $D_i$ for each $i \in\Sigma(1)$. 
Since $X$ is smooth, there is an exact sequence \cite[\S3.4]{Fulton}
\begin{equation}\label{exactseq} 
 0\to M \to \IZ^{\Sigma(1)} \to \Pic(X) \to0,
\end{equation} 
where we denote by $M:=N^\vee$ the dual lattice of $N$. $\Pic(X)$ is a free
abelian group, so $\Pic_\Delta(X)$ is the free abelian subgroup generated by
$\Delta(X)$.


\begin{lemma} \label{lem:chains}
$\Delta(X)$ consists of a finite number of chains of type $A$.
\end{lemma}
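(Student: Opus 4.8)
The plan is to identify $\Delta(X)$ with a subset of the torus-invariant curves and then read off the chain structure from the cyclic combinatorics of a smooth complete fan in dimension two.

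First I would show that every irreducible $-2$-curve $C$ on $X$ is one of the $D_i$. Since $-K_X=\sum_{i\in\Sigma(1)}D_i$ is effective (\cite{Fulton}) and $K_X.C=0$ (adjunction, as $C\cong\IP^1$ with $C^2=-2$), we get $\sum_i(D_i.C)=0$. If $C$ were not among the $D_i$, then $D_i.C\ge 0$ for all $i$, so the vanishing of the sum forces $D_i.C=0$ for every $i$. But by \eqref{exactseq} the classes $[D_i]$ generate $\Pic(X)$, which for the (rational, hence $\Pic=\mathrm{NS}$ torsion-free) surface $X$ carries a non-degenerate intersection form; pairing to zero against a generating set would force $C=0$ in $\Pic(X)$, contradicting $C.H>0$ for an ample $H$. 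Hence $C=D_i$, and then $C^2=-\alpha_i=-2$ forces $\alpha_i=2$. Conversely every $D_i$ is a smooth rational curve on the smooth complete toric surface $X$, so $\alpha_i=2$ puts $D_i$ into $\Delta(X)$. Thus $\Delta(X)=\{D_i:\alpha_i=2\}$, which is finite because $\Sigma(1)$ is.

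Next I would recall the dual graph of the full anticanonical cycle: ordering the rays cyclically $v_1,\dots,v_n$, each maximal cone is $\langle v_i,v_{i+1}\rangle$ and, in the corresponding $\AAA^2$-chart, $D_i$ and $D_{i+1}$ are the two coordinate axes; hence $D_i.D_{i+1}=1$ while $D_i.D_j=0$ for $j\not\equiv i,i\pm1\ (\mathrm{mod}\ n)$. So the curves $D_1,\dots,D_n$ form an $n$-cycle, and $\Delta(X)$ is the induced subgraph on $\{i:\alpha_i=2\}$ — automatically a disjoint union of paths and, a priori, possibly the whole cycle. It therefore only remains to exclude the case $\alpha_i=2$ for all $i$.

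This is the one (mild) delicate point. If $\alpha_i=2$ for every $i$, the relation $\alpha_i v_i=v_{i-1}+v_{i+1}$ becomes $v_{i+1}-v_i=v_i-v_{i-1}$, so all consecutive differences equal a fixed $d\in N$; summing around the cycle gives $nd=0$, hence $d=0$ and all $v_i$ coincide, which is absurd (a complete two-dimensional fan has at least three distinct rays). Therefore $\{i:\alpha_i=2\}$ is a proper subset of the $n$-cycle, so each connected component of $\Delta(X)$ is a path, i.e.\ a chain of type $A$, and there are finitely many of them. I expect the main obstacle, such as it is, to be the first step — proving that a $-2$-curve is necessarily torus-invariant — since that is where effectivity of $-K_X$ and non-degeneracy of the intersection form enter; everything after that is forced by the cyclic structure of the fan.
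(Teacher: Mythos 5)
Your proof is correct and follows the paper's strategy: first reduce $\Delta(X)$ to torus-invariant curves, then read off the chain structure from the cyclic adjacency of rays in a complete smooth fan. The only differences are minor: you deduce torus-invariance from $K_X.C=0$ together with non-degeneracy of the intersection form (the paper instead writes $C$ as an effective sum $\sum_i a_iD_i$ and uses $C^2=-2<0$ to find some $C.D_i<0$), and you supply the explicit summing-around-the-cycle argument that excludes the closed $\tilde A$ configuration, a step the paper dismisses as ``easy to see''.
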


\begin{proof}
Let $C$ be a curve in $\Delta(X)$. Using the exact sequence~(\ref{exactseq})
we observe that $C$ is linearly equivalent to a sum 
$\sum_{i \in \Sigma(1)} a_i D_i$ of torus invariant divisors indexed by the
rays in the fan $\Sigma$ of $X$. Since $C$ is effective, we may choose 
this Weil divisor in such a way that it is also effective, so $a_i \geq 0$
for each $i\in \Sigma(1)$.
Then 
\[ -2 = C.C = C. \Big( \sum_i a_i D_i \Big) = \sum_i a_i (C.D_i) \]
so there exists some $i\in \Sigma(1)$ such that $C.D_i<0$. Since $C$ and
$D_i$ are both irreducible curves, we conclude that $C=D_i$. Thus all 
curves in $\Delta(X)$ are torus invariant curves corresponding to rays of
$\Sigma$.
Such curves intersect if and only if the corresponding rays span a cone
(see for example \cite[\S5.1]{Fulton}). 
By looking at the fan $\Sigma$ which is supported on $N_\IR \cong \IR^2$
we see that the only possible configurations are a finite number of 
chains of type $A$ or a single closed chain of type 
$\tilde{A}_{|\Sigma(1)|}$.

In order to see that this final possibility doesn't occur, note that if
$D_i^2=-2$ then $2v_i=v_{i-1}+v_{i+1}$ which in turn means that $v_i$ lies
on the line in $N_\IR$ through $v_{i-1}$ and $v_{i+1}$. It is clear however, that 
the generators of the rays of a complete fan can not all be collinear.
\end{proof}


\begin{lemma} \label{lem:big}
If $X$ is a smooth, projective, toric variety (not necessarily a surface),
then $-K_X$ is big.
\end{lemma}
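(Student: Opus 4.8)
The plan is to use the standard toric description of the anti-canonical divisor together with the fact that it is effective and supported on all torus-invariant divisors. Recall that on a smooth projective toric variety $X$ with fan $\Sigma$ we have $-K_X = \sum_{i \in \Sigma(1)} D_i$, and this is an effective torus-invariant divisor whose support is the union of all the $D_i$, i.e.\ the complement of the open torus orbit. Bigness of a divisor $L$ is equivalent to $\kappa(X,L) = \dim X$, so I want to produce enough sections of $\ko_X(-mK_X)$ for $m \gg 0$.

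First I would recall the combinatorial description of global sections: for a torus-invariant divisor $D = \sum_i a_i D_i$ the space $H^0(X, \ko_X(D))$ has a basis indexed by the lattice points of the polytope $P_D = \{ u \in M_\IR : \langle u, v_i \rangle \geq -a_i \text{ for all } i \}$. For $D = -mK_X = \sum_i m D_i$ this polytope is $P_{-mK_X} = \{ u \in M_\IR : \langle u, v_i \rangle \geq -m \}$, which is $m$ times the fixed polytope $Q := \{ u \in M_\IR : \langle u, v_i \rangle \geq -1 \ \forall i\}$. Since $\Sigma$ is complete, the rays $v_i$ span $N_\IR$ positively, so the inequalities $\langle u, v_i\rangle \geq -1$ cut out a bounded region: $Q$ is a polytope, and it has nonempty interior because $0$ lies in its interior (indeed $\langle 0, v_i \rangle = 0 > -1$). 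Hence $Q$ is a full-dimensional lattice polytope, and $\#(mQ \cap M)$ grows like $m^{\dim X} \cdot \mathrm{vol}(Q)$ as $m \to \infty$.

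From this the bigness follows immediately: $h^0(X, \ko_X(-mK_X)) = \#(P_{-mK_X} \cap M) = \#(mQ \cap M)$ grows on the order of $m^{\dim X}$, which is exactly the condition $\kappa(X, -K_X) = \dim X$, i.e.\ $-K_X$ is big. Equivalently, one may phrase it as: a high multiple of $-K_X$ gives a birational map onto its image, which is clear since the associated rational map is the one attached to the full-dimensional polytope $mQ$ and restricts to an embedding (in fact, a locally closed immersion) on the open torus orbit.

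I do not anticipate a serious obstacle here; the only point that needs a line of care is the claim that $Q$ is bounded, which is where completeness of the fan enters — if the $v_i$ did not positively span $N_\IR$, the region $Q$ could be an unbounded polyhedron and the section count would not have the right growth. Since we have assumed $X$ is projective, equivalently $\Sigma$ is complete, this is automatic. (One could alternatively cite that on a projective toric variety any divisor whose support contains an ample torus-invariant divisor, or whose polytope is full-dimensional, is big; but the direct polytope computation above is cleanest and self-contained.)
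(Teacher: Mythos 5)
Your proof is correct, but it takes a genuinely different route from the paper. The paper also starts from $-K_X\sim\sum_{i\in\Sigma(1)}D_i$, but then argues abstractly: this class is a sum of all the generators of the pseudoeffective cone of $X$ (which for a toric variety is spanned by the invariant divisors), hence lies in the interior of that cone, and by the general fact that the big cone is the interior of the pseudoeffective cone (Lazarsfeld, Theorem~2.2.26) it is big. You instead compute directly: $H^0(X,\ko_X(-mK_X))$ is counted by lattice points of $mQ$ with $Q=\{u\in M_\IR:\langle u,v_i\rangle\geq-1\}$, which is bounded by completeness of the fan and full-dimensional since $0$ is an interior point, so $h^0$ grows like $m^{\dim X}$ and $\kappa(X,-K_X)=\dim X$. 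Your argument is more elementary and self-contained --- it needs only the standard polytope description of sections rather than the cone-theoretic characterization of bigness and the structure of the pseudoeffective cone --- while the paper's is shorter given the cited machinery. One small imprecision: $Q$ is in general only a \emph{rational} polytope, not a lattice polytope (its vertices need not lie in $M$ unless $X$ is Gorenstein Fano in the appropriate sense); this does not affect your argument, since the lattice-point count of $mQ$ for any full-dimensional bounded rational polytope still has leading term $\mathrm{vol}(Q)\,m^{\dim X}$.
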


\begin{proof}
As is well-known (see \cite[\S4.3]{Fulton}), $-K_X$ is linearly equivalent
to $\sum_{i\in\Sigma(1)}D_i$, the sum  of all torus invariant prime divisors.
Picking an ample divisor $H=\sum_i a_iD_i$, we can assume that all $a_i>0$.
Then $H+mK_X$ is effective for some $m>0$, or in other words, $-mK_X$ is the
sum of an ample and an effective divisor, hence big.
\end{proof}


\begin{lemma} \label{lem:primitive}
If $X$ is a smooth, projective toric surface containing two divisors 
$D_i$, $D_{i+1}$ corresponding to adjacent rays $i,i+1\in\Sigma(1)$ 
with $D_i^2\neq-2$ and $D_{i+1}^2\neq-2$, then the group embedding 
$\Pic_\Delta(X)\subset\Pic(X)$ splits.
\end{lemma}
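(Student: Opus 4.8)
The plan is to exhibit, using the exact sequence~(\ref{exactseq}), an explicit homomorphism $\Pic(X)\to\Pic_\Delta(X)$ that is the identity on $\Pic_\Delta(X)$; such a retraction is equivalent to the splitting we want. Recall from Lemma~\ref{lem:chains} that every $-2$-curve is one of the torus invariant divisors $D_j$, so $\Pic_\Delta(X)$ is the image under $\IZ^{\Sigma(1)}\twoheadrightarrow\Pic(X)$ of the coordinate subgroup spanned by those $e_j$ with $D_j^2=-2$. The idea is to build the retraction on the level of $\IZ^{\Sigma(1)}$, i.e.\ to find a $\IZ$-linear map $\rho\colon\IZ^{\Sigma(1)}\to\Pic_\Delta(X)$ which kills the image of $M$ (so that it descends to $\Pic(X)$) and which sends $e_j\mapsto[D_j]$ for each $-2$-curve $D_j$.

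First I would set up coordinates: order the rays cyclically as $v_1,\dots,v_n$ with $v_{i}=(1,0)$, $v_{i+1}=(0,1)$ the two chosen adjacent rays with non-$(-2)$ self-intersection (this uses that $\{v_i,v_{i+1}\}$ is a $\IZ$-basis of $N$, hence may be taken to be the standard basis). The image of $M=N\dual$ in $\IZ^{\Sigma(1)}$ is spanned by the two vectors $(\langle m,v_1\rangle,\dots,\langle m,v_n\rangle)$ for $m$ running over the dual basis of $\{v_i,v_{i+1}\}$; concretely, these two relation vectors have their $i$-th, respectively $(i+1)$-th, coordinate equal to $1$ and $0$ (resp.\ $0$ and $1$). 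Now define $\rho$ on the generators by $\rho(e_k)=[D_k]$ whenever $D_k$ is a $-2$-curve, and $\rho(e_i)=\rho(e_{i+1})=0$ for the two distinguished rays; there may be further rays with non-$(-2)$ self-intersection, and on those I would also be free to set $\rho(e_k)=0$ — the point is that the two relation vectors are supported, in their $i$- and $(i+1)$-coordinates, exactly on the two rays where $\rho$ has been declared to vanish. Hence $\rho$ annihilates the image of $M$ and descends to a homomorphism $\bar\rho\colon\Pic(X)\to\Pic_\Delta(X)$.

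It remains to check that $\bar\rho$ restricts to the identity on $\Pic_\Delta(X)$, i.e.\ that $\bar\rho([D_j])=[D_j]$ for every $-2$-curve $D_j$. By construction $\bar\rho([D_j])=\rho(e_j)=[D_j]$, provided $e_j$ itself is a valid representative — which it is, since $D_j$ is torus invariant — so this is immediate once one observes that $\bar\rho\circ(\text{quotient map})=\rho$ and that $[D_j]$ is the class of $e_j$. The main obstacle, and the only place where the hypothesis on the two adjacent non-$(-2)$ rays is genuinely used, is ensuring that the two defining relations from $M$ are killed: this is exactly what forces us to have two rays, adjacent in the fan so that their generators form a basis, on which we are permitted to send $\rho$ to zero. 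Without such a pair, the relation vectors would have support meeting the $-2$-locus in a way that obstructs extending $e_j\mapsto[D_j]$ consistently across the relations. I would close by remarking that, by the dual formulation, a splitting of $\Pic_\Delta(X)\embed\Pic(X)$ is the same as a complementary subgroup $P$, which is the form in which condition~(5) is stated in Theorem~\ref{thm:AutDX}.
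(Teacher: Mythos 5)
Your overall strategy --- producing a retraction $\Pic(X)\to\Pic_\Delta(X)$ by defining a map $\rho$ on $\IZ^{\Sigma(1)}$ --- is reasonable, but the verification that $\rho$ kills $\iota(M)$ is false, and that is the entire content of the lemma. The relation vector attached to a dual basis element $m$ is $\iota(m)=\sum_{k\in\Sigma(1)}\langle m,v_k\rangle e_k$; choosing the dual basis of $\{v_i,v_{i+1}\}$ only normalises the $i$-th and $(i{+}1)$-st coordinates to $(1,0)$ and $(0,1)$ --- it does not make the remaining coordinates $\langle m,v_k\rangle$, $k\neq i,i+1$, vanish. So the relation vectors are not ``supported exactly on the two rays where $\rho$ vanishes'', and $\rho(\iota(m))=\sum_{k:\,D_k^2=-2}\langle m,v_k\rangle[D_k]$ has no reason to be zero. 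Concretely, on the Hirzebruch surface $F_2$ with rays $v_1=(1,0)$, $v_2=(0,1)$, $v_3=(-1,2)$, $v_4=(0,-1)$, the only $-2$-curve is $D_2$; taking $(i,i+1)=(3,4)$, the dual basis produces the relation vector $-2e_1-e_2+e_4$, which your $\rho$ sends to $-[D_2]\neq0$. Hence $\rho$ does not descend to $\Pic(X)$. (The correct retraction in this example must send $e_4\mapsto[D_2]$ even though $D_4$ is not a $-2$-curve, so you are not in fact ``free to set $\rho(e_k)=0$'' on the other non-$(-2)$ rays.)

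The paper argues dually and avoids this issue: the two relations $e_i+\sum_{k\neq i,i+1}\langle m_1,v_k\rangle e_k$ and $e_{i+1}+\sum_{k\neq i,i+1}\langle m_2,v_k\rangle e_k$ are used to eliminate $e_i$ and $e_{i+1}$, which shows that $\Pic(X)$ is \emph{free} on the classes $\{\pi(D_j):j\neq i,i+1\}$. Since the two excluded rays are precisely the ones assumed not to be $-2$-curves, every $-2$-curve class belongs to this basis, so $\Pic_\Delta(X)$ is generated by a subset of a basis of $\Pic(X)$ and is therefore a primitive direct summand; the splitting follows. If you want to keep your retraction formulation, you must first rewrite each class $[D_k]$ in this basis and only then project onto the $-2$-coordinates; the naive assignment on the generators $e_k$ of $\IZ^{\Sigma(1)}$ is not compatible with the relations coming from $M$.
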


\begin{proof}
We use the standard exact sequence
 $0\to M \xrightarrow{\iota} \IZ^{\Sigma(1)} \xrightarrow{\pi} \Pic(X) \to0$.
Since $X$ is smooth, the generators  $v_i$ and $v_{i+1}$ of the rays
$i,i+1\in\Sigma(1)$ form a basis of $N\cong\IZ^2$. Using the dual basis
for $M$ and considering the map $\iota$, it is easy to see that the free
abelian group $\Pic(X)$ has a basis 
$\{\pi(D_j) \mid j\in\Sigma, j\neq i,i+1\}$.
Furthermore, since $D_i^2\neq-2$ and $D_{i+1}^2\neq-2$, the subgroup
spanned by classes of $-2$-curves is generated by elements of this 
basis, and so is primitive in $\Pic(X)$. Hence, the quotient 
$\Pic(X)/\Pic_\Delta(X)$ is free and there exists a splitting.
\end{proof}


\begin{example} \label{ex:toric_counterexamples}
We now give an example of a smooth, toric surface $X$ such that the
embedding $\Pic_\Delta(X)\subset\Pic(X)$ of abelian groups does not split. 
Consider the toric surface given by the fan in the following picture:
\begin{center}
  \psfrag{1}{\scriptsize $D_1$}
  \psfrag{2}{\scriptsize $D_2$}
  \psfrag{3}{\scriptsize $D_3$}
  \psfrag{4}{\scriptsize $D_4$}
  \psfrag{5}{\scriptsize $D_5$}
  \psfrag{6}{\scriptsize $D_6$}
  \psfrag{7}{\scriptsize $D_7$}
  \psfrag{8}{\scriptsize $D_8$}
  \includegraphics[width=0.2\textwidth]{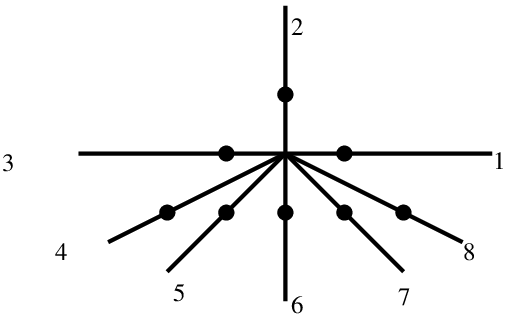}
\end{center}
It can be obtained by blowing up $\IP^1\times\IP^1$ four times. 
The self-intersection numbers are $D_2^2=0$, $D_4^2=D_8^2=-1$, and 
$D_1^2=D_3^2=D_5^2=D_6^2=D_7^2=-2$. 
Choosing $v_1$ and $v_2$ as a basis of $N$, we see that the map 
$M\to\IZ^{\Sigma(1)}$ is given by the transpose of the matrix 
$\bigl(\begin{smallmatrix}
1 & \; 0 & -1            & -2 & -1 & \phantom{-} 0 & \phantom{-} 1 & \phantom{-} 2 \\
0 & \; 1 & \phantom{-} 0 & -1 & -1 & -1            & -1            & -1
\end{smallmatrix}\bigr).$
In particular, the classes of $D_3,\dots,D_8$ form a basis of 
$\Pic(X)$. All of these are $-2$-classes except for $D_4$ and $D_8$.
Writing the $-2$-curve $D_1$ in terms of this basis, we have 
$D_1=2(D_8-D_4)+D_3+D_5+D_7$ in $\Pic(X)$. Therefore $0=2(D_8-D_4)$
in $\Pic(X)/\Pic_\Delta(X)$, so there is torsion. This implies that
the embedding of $\Pic_\Delta(X)$ into $\Pic(X)$ is not primitive.

In fact, it is an easy combinatorial exercise to show that there are only three
smooth, projective toric surfaces which do not have such a splitting. They are given 
by the smooth fans over the following polygons --- here and in the following, the 
vertices on the boundary of the polygon are the generators of the rays of the fan.
Circular dots (\includegraphics[scale=.4]{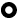}) indicate $-2$-curves.

{\centerline{
  \includegraphics[height=7.5ex]{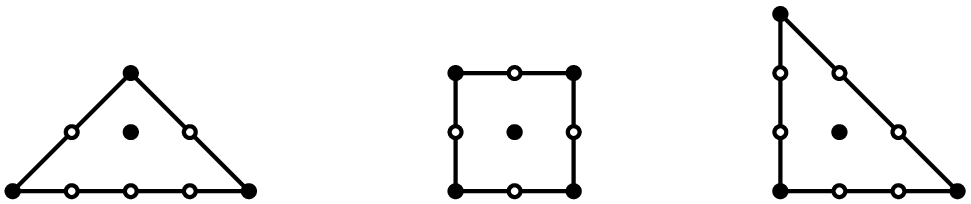}
}}
\end{example}

\begin{lemma} \label{lem:toric_aut}
If $X$ is not one of the three surfaces in Example~\ref{ex:toric_counterexamples},
then there exists an $\Aut(X)$-invariant complement $P$ for the subgroup
$\Pic_\Delta(X)$ in $\Pic(X)$ if and only if there exists an $\Aut(\Sigma(X))$-invariant
complement.
\end{lemma}

\begin{proof}
This follows at once from two geometric facts about toric varieties:

First, $\Aut(X)$ is generated by its identity component $\Aut^0(X)$ and the subgroup
$\Aut(\Sigma(X))$ of fan automorphisms (the latter is by definition the subgroup of 
lattice automorphisms of $N$ fixing the fan $\Sigma$). This statement is a corollary
of Demazure's Structure Theorem \cite[\S3.4]{Oda}.

Second, $\Aut^0(X)$ acts trivially on all of $\Pic(X)$ because the Picard group of a 
toric variety is discrete, i.e.\ $\Pic^0(X)=0$.
\end{proof}

Together, Lemmas~\ref{lem:chains}, \ref{lem:primitive} and \ref{lem:toric_aut} 
prove all parts of Theorem~\ref{thm:toric}. It remains to investigate when an 
$\Aut(\Sigma(X))$-invariant complement exists. For trivial reasons, this is always
true if $\Aut(\Sigma(X))=1$. For more symmetric toric surfaces, both answers are 
possible, as the next two examples show.

\begin{example}
Suppose $\Aut(\Sigma(X))=\IZ/2$ and the action exchanges two rays which do not 
correspond to $-2$-curves, and whose generators form a $\IZ$-basis for $N$. For
example the toric surfaces given by fans over the following polygons:

\medskip

{\centerline{
  \includegraphics[height=10ex]{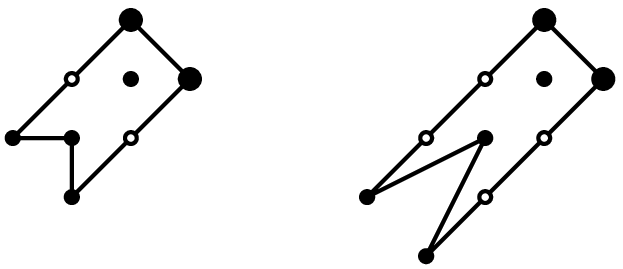}
}}

Excluding the two marked curves (\includegraphics[scale=0.3]{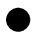}), the remaining torus invariant divisors form a basis 
for $\Pic(X)$, and the subset of these divisors which are not $-2$-curves generate an 
$\Aut(\Sigma(X))$-invariant complement to $\Pic_\Delta(X)$.
Similarly, and again in the case $\Aut(\Sigma(X))=\IZ/2$, suppose there exists a 
$\IZ$-basis for $N$ coming from a ray which is fixed by the action and has odd 
self-intersection number, and another ray which doesn't correspond to a $-2$-curve.
For example, consider fans over the following polygons, where the basis
for $N$ is again marked:

\medskip

{\centerline{
  \includegraphics[height=10ex]{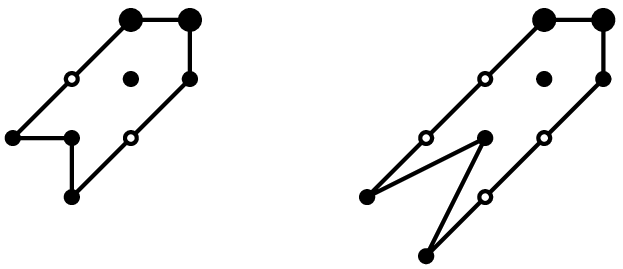}
}}

It is then possible to show that there exists an invariant linear combination of the 
fixed divisor and the two divisors in the $\Aut(\Sigma(X))$-orbit of the non-fixed marked
ray, which, together with all the remaining torus invariant divisors, forms a basis for 
$\Pic(X)$. Again, the subset of these divisors which are not $-2$-curves generate an 
$\Aut(\Sigma(X))$-invariant complement to $\Pic_\Delta(X)$.
\end{example}

\begin{example} \label{computerexample}
For the following example, computer algebra was used to make sure that no
invariant complement exists. Note that the rays fixed by the $\Aut(\Sigma(X))$ action correspond 
to curves with even self-intersection number, so the argument in the previous 
example doesn't apply.

{\centerline{
  \includegraphics[height=5ex]{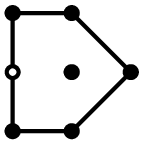}
}}

\end{example}

We conclude this section with a few general observations about, and on the 
construction of, some classes of examples: As a straightforward consequence 
of Theorem~\ref{thm:AutDX}, we see that any smooth projective toric surface
without $-2$-curves has no autoequivalences beyond the standard ones, i.e.\
$\Aut(\DDD(X))=A(X)$. We note that there are infinitely many examples of
such surfaces including, for example, all Hirzebruch surfaces $F_n$ for
$n>2$. It is not hard to check that $-K_X$ is ample if and only if there 
are no torus invariant curves of self-intersection $-2$ or lower. In fact,
there are famously just five smooth toric Fano surfaces \cite[Proposition 2.21]{Oda}.
Therefore, there are infinitely many smooth projective toric surfaces where $-K_X$ is not
ample (and so are not covered by the theorem of Bondal and Orlov \cite{BO})
but for which $\Aut(\DDD(X))=A(X)$. 

On the other hand, it is easy to construct examples with more interesting 
groups of autoequivalences. If $v_0,v_1$ form a basis for a rank two lattice 
$N$, then we can define inductively $v_{s+1}=2v_s-v_{s-1}$ for 
$s=1,\dots,\ell$. Taking these as generators of rays of a fan, we can
produce a complete smooth fan by adding extra rays (with generators 
$v_{\ell+2},\dots,v_{d-1}$) making sure we do not subdivide any of the
existing cones. This doesn't affect the self-intersection numbers of 
$D_{1},\dots,D_{\ell}$, which are by construction $-2$. Indeed by making
an appropriate choice of $v_{\ell+2},\dots,v_{d-1}$, we can ensure that
$D_{0}$ and $ D_{\ell+1}$ do not have self-intersection number $-2$. 
Therefore it is possible to construct a smooth projective toric surface
with a chain of $-2$-curves of arbitary length. Blowing up the intersection
point of two torus invariant curves $D_{s}$ and $ D_{s+1}$ (which 
corresponds to subdividing the cone spanned by $v_{s}$ and $ v_{s+1}$) has 
the effect of reducing the self-intersection numbers of the strict 
transforms $\widetilde{D}_{s}$ and $\widetilde{D}_{s+1}$ by $1$, and
inserting an exceptional $-1$-curve. In this way we can split up a chain
of $-2$-curves into pieces. In fact, we can produce any number of chains
of $-2$-curves of any length.


\section{Surfaces with $\kk^*$-action and proof of Theorem \ref{thm:complexity-one}}
\label{sec:complexity-one}

\noindent
In this final section, we present some non-toric surfaces to which our results
apply. These examples will be certain rational surfaces with $\kk^*$-action.
As references on such surfaces we use mainly the classical \cite{OW} and also
\cite{PS}. 

Start with the trivially ruled surface $C\times\IP^1$, where $C$ is
a smooth, projective curve of genus $g$. This surface inherits a
$\kk^*$-action from the natural action on $\IP^1$, and the fixed points
make up the two curves $F^+:=C\times\{0\}$ and $F^-:=C\times\{\infty\}$.

Blowing up a fixed point produces another surface with $\kk^*$-action. The
exceptional divisor consists of fixed points, so that the process can be 
iterated. Likewise, all negative curves consist of fixed points, and can 
thus be contracted to a surface with $\kk^*$-action. By \cite[Theorem 2.5]{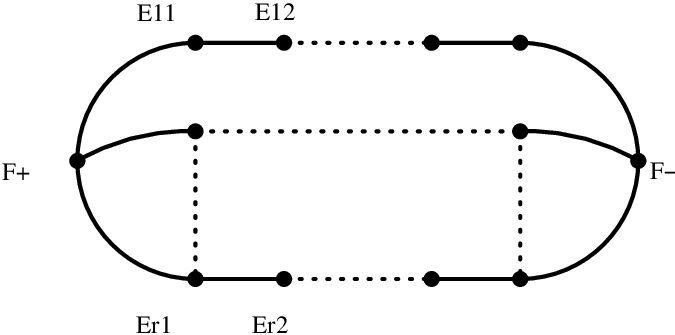},
all smooth surfaces obtained in this fashion have the following configuration
of fixed curves, made up of of $r$ arms, where we denote the curves in the
$\ell$-th arm by $E_{\ell,1},E_{\ell,2},\dots$, starting from $F^+$:
\begin{center}
  \psfrag{F+}{$\scriptstyle F^+$}
  \psfrag{F-}{$\scriptstyle F^-$}
  \psfrag{E11}{$\scriptstyle E_{1,1}$}
  \psfrag{E12}{$\scriptstyle E_{1,2}$}
  \psfrag{Er1}{$\scriptstyle E_{r,1}$}
  \psfrag{Er2}{$\scriptstyle E_{r,2}$}
  \includegraphics[width=0.38\textwidth]{OW}
\end{center}
In fact, all smooth, projective surfaces with an effective $\kk^*$-action
can be obtained in this way, where we allow $F^+$ or $F^-$ to be contracted
in case they are $-1$-curves, \cite[Proposition 2.6]{OW}. Let $X$ be such a
surface with associated graph as above.

By construction, the N\'eron-Severi group of $X$ is generated by the $E_{\ell,i}$,
$F^+$, $F^-$ and $D$, the closure of a generic $\kk^*$-orbit. Thus $D^2=0$,
$D.F^\pm=1$ and $D.E_{\ell,i}=0$. Then, by \cite[Theorem 3.2.1]{PS}, the
anti-canonical divisor has the form
 $-K_X=F^+ + F^- + (2-2g-r)D + \sum_{\ell,i}E_{\ell,i}$.

We need to impose two conditions on $X$: First, all isotropy groups are
connected, i.e.\ we exclude non-zero cyclic groups. Second, the surface
is rational, i.e.\ $g=0$ (but note that $\kk^*$ still only acts on one
factor of the original surface $\IP^1\times\IP^1$).

Rationality implies $\Pic(X)=\text{NS}(X)$. Furthermore, this is a free abelian 
group (see e.g.\ \cite[3.15]{PS}). We proceed to verify the assumptions of
Theorem~\ref{thm:AutDX}.

\begin{lemma}
If all isotropy groups are trivial, then $-2$-curves occur in chains of type $A$.
\end{lemma}

\begin{proof}
\cite[3.5]{OW} describes the isotropy groups from the intersection graph via
continued fractions. The isotropy groups being trivial forces the sequence of 
self-intersection numbers of each arm to be $-1,-2,\dots,-2,-1$. In particular,
$-2$-curves can only occur in chains of type $A$ (note that $F^+$ or $F^-$ can 
also be $-2$-curves).
\end{proof}

\begin{lemma}
Let $X$ be rational with trivial isotropy groups. Then $-K_X$ is big.
\end{lemma}

\begin{proof}
We start by showing that $D=\sum_i E_{\ell,i}$ in $\Pic(X)$, where $\ell$ is 
fixed, i.e.\ the divisor is given by the curves on any arm of the above graph.
The curves intersect as follows:
If $E_{\ell,i}$ meets $F\in\{F^+,F^-\}$, then $E_{\ell,i}.F=1$ and $E_{\ell,i}^2=-1$;
otherwise $E_{\ell,i}.F^\pm=0$ and $E_{\ell,i}^2=-2$. 
Further, $D.F^\pm=E_{\ell,i}.E_{\ell,i+1}=1$ and all other intersection products vanish.
This implies $C.D=C.\sum_iE_{\ell,i}$ for $C$ any of the curves $F^+,F^-,D,E_{k,j}$.
Since those curves generate the Picard group, the divisors $D$ and $\sum_iE_{\ell,i}$
are numerically equivalent. They are then also linearly equivalent as there are no
non-trivial line bundles of degree 0.

Thus we know $-K_X=F^++F^-+2D$. This already implies that $-K_X$ is in the 
pseudo-effective cone of $X$. Furthermore, the relation also shows that all of
$-mK_X-F^-$, $-mK_X-F^+$, $-mK_X-D$ and $-mK_X-E_{\ell,i}$ lie in the pseudo-effective
cone, if $m\gg0$. Hence $-K_X$ sits in the interior of the pseudo-effective cone and 
is therefore big, see \cite[Theorem 2.2.26]{Lazarsfeld}.
\end{proof}

Observe that a toric surface always has big $-K_X$ (see Lemma~\ref{lem:big}) 
whereas blowing up $\IP^2$ in nine general points produces a rational surface on
which $-K_X$ is not big anymore. Surfaces with $\kk^*$-action do not necessarily
have big anti-canonical class and we need to restrict to examples with trivial 
isotropy in order to have this property.

\begin{lemma}
If $X$ is rational with trivial isotropy groups and such that not both $F^+$
and $F^-$ are $-2$-curves, then the inclusion $\Pic_\Delta(X)\subset\Pic(X)$ splits.
\end{lemma}

\begin{proof}
$\Pic(X)$ is the quotient of the free abelian group generated by $F^+$, $F^-$, $D$
and the exceptional curves $E_{\ell,i}$, subject to the relations 
$F^+-F^-+\sum_{\ell,i}(i-1)E_{\ell,i}+(F^-)^2D=0$ and $D=\sum_iE_{\ell,i}$
for all $\ell$ (see \cite[Corollary 3.5]{PS}).

In the quotient $\Pic(X)/\Pic_\Delta(X)$ we observe that $E_{\ell,i}=0$ for all exceptional curves
not adjacent to $F^+$ or $F^-$, as these are all $-2$-curves. Also, by assumption, at least one of $F^+$ and $F^-$ will
survive in the quotient. 

Therefore, for each arm we have a relation in which the two remaining classes have coefficient 1, 
and one further relation in which the classes of $F^+$ and $F^-$ have coefficient 1.
It follows that the quotient $\Pic(X)/\Pic_\Delta(X)$ has a basis consisting
of the curves $E_{\ell,1}$ for all $\ell$, together with $D$ and either $F^+$ or $F^-$
(or neither if one of them is a $-2$-curve). 
\end{proof}

\begin{remark} \label{rem:picard6}
The numbers $(F^+)^2$ and $(F^-)^2$ are not arbitrary: assuming trivial 
isotropy, they can attain any values subject to the restriction 
$(F^+)^2+(F^-)^2=2-\rank(\Pic(X))$; see \cite[Theorem 2.5(iv)]{OW}. In particular, 
$(F^+)^2=(F^-)^2=-2$ forces $\rank(\Pic(X))=6$.
\end{remark}

We give examples of surfaces with $\kk^*$ which meet all conditions of
Theorem~\ref{thm:AutDX}:

\begin{lemma}
Let $X$ be rational with trivial isotropy groups and either $(F^+)^2<-2$
or $(F^-)^2<-2$. If all arms of the intersection graph have different lengths,
then $\Aut(X)$ fixes $\Pic(X)$ element-wise.
\end{lemma}

\begin{proof}
Without loss of generality, we may assume $(F^+)^2<-2$. Since $F^+$ is a 
negative curve of minimal intersection number, it is fixed by all automorphisms.
By the assumption on arm lengths, all other negative curves are also fixed.
The remaining curve $F^-$ is then likewise fixed.
\end{proof}

We finish with a comment on the relationship between the two types of
examples: a surface with $\kk^*$-action as presented here will be toric
(i.e.\ admit an action of $(\kk^*)^2$ compatible with the original action)
only if $r\leq2$, cf.\ \cite[\S4.2]{OW} --- this leads to a circular
intersection graph corresponding to the rays in the fan of a toric surface.

The first toric surface of Example~\ref{ex:toric_counterexamples} is a surface 
with $\kk^*$-action which has no cyclic isotropy groups.
This surface has $r=1$ and both $F^-$ and $F^+$ are $-2$-curves. The divisor $D_2$ of
that example is the closure $D$ of a generic $\kk^*$-orbit mentioned in this section.
By contrast, the surface given by the square polygon of
Example~\ref{ex:toric_counterexamples} comes from a $\kk^*$-surface with $r=2$. 
By Remark~\ref{rem:picard6}, the third example does not lead to a surface with 
$\kk^*$-action of trivial isotropy, as it has Picard rank 7.

\bigskip
\noindent
N. Broomhead, D. Ploog: Institut f\"ur Algebraische Geometrie, Leibniz Universit\"at Hannover,
Welfengarten 1, 30167 Hannover, Germany.

\noindent
e-mail: broomhead@math.uni-hannover.de; ploog@math.uni-hannover.de
\end{document}